\newtheorem{lm}{Lemma}
\newtheorem{pr}{Proposition}
\newtheorem{tm}{Theorem}
\newtheorem{cor}{Corollary}
\newtheorem{define}{Definition}
\newtheorem*{example}{Example}
\newcounter{remarkcount}
\title{Virtual normalization and virtual fundamental classes}
\author{Alberto L\'opez Mart\'in}
\thanks{The author was supported in part by the Swiss National Science Foundation project 200020\_126756.}
\subjclass[2010]{Primary 14A20\ \ Secondary 14D23}
\address{Institut f\"ur Mathematik, Universit\"at Z\"urich-Irchel,
Z\"urich, CH-8057}
\email{alopez@math.uzh.ch}
\begin{document}
\date{\today}
\bibliographystyle{plain}
\begin{abstract}
  In this paper, we compare the virtual fundamental classes of the stacks of $(g,\beta,\mu)$-stable ramified maps $\overline{\mathfrak U}_{g,\mu}(X,\beta)$ and of $(g,\beta,\mu)$-log stable ramified maps $\overline{\mathfrak U}^{\mathrm{log}}_{g,\mu}(X,\beta)$. For that we will see how they are identified via virtual normalization and then apply Costello's push-forward formula. \end{abstract}
  \maketitle
\setcounter{section}{-1}
\section{Introduction}
\label{intro}
An important aspect of moduli spaces in algebraic geometry is the notion of \emph{virtual fundamental class}.

Inspired by work of Fukaya and Oh in the symplectic case, B.\,Kim, \linebreak A.\,Kresch, and Y.G.\,Oh \cite{KKO} presented a new compactification of the space of maps from pointed nonsingular projective stable curves to a nonsingular complex projective variety with given ramification indices at the marked points.
 
B.\,Kim \cite{Kim} constructed the moduli space of log stable maps with given ramification indices. Each of these spaces is equipped with a virtual fundamental class. In this article we compare the virtual fundamental classes, and show, in particular, that they determine the same numerical invariants.

In Section \ref{one} we introduce the notion of virtual normalization of an algebraic stack $X$. When $X$ has a relative perfect obstruction theory over an algebraic stack $\mathcal{S}$, it may be described as virtually smooth over $\mathcal{S}$. Since normalization commutes with smooth base change,
the base change by a normalization of $\mathcal{S}$ makes sense as virtual
normalization of $X$. That will be the setting in Section \ref{two}. We will make the observation that one variant of the moduli space introduced by Kim can be identified with the virtual normalization of the space of maps constructed by Kim, Kresch, and Oh. This will let us apply K.\,Costello's Theorem 5.0.1 in \cite{Costello}.

\subsubsection*{Acknowledgements.} The author thanks M.\,Olsson for communicating the argument of the proof of Proposition 1. He would also like to thank A.\,Kresch, for his constant guidance and support.

\section{Virtual normalization}
\label{one}

We will assume the reader is familiar with basic concepts (cf. \cite{KKato}) from logarithmic geometry, and adopt the notation in \cite{OlLogGeo} unless otherwise stated.

Let $S$ denote a normal locally noetherian base scheme.
Throughout the paper, schemes will be locally of finite type over $S$, and 
algebraic stacks will be locally of finite type over $S$
with finite-type though not necessarily separated diagonal. All monoids will be commutative. 

Notation: We will use $P\subset P^{sat}\subset P^{gp}$ for an integral monoid $P$.
As in \cite{OlLogGeo}, for $P$ a fine monoid (i.e. integral and finitely generated),
$\mathcal{S}_P$ will denote the
algebraic stack $[\mbox{Spec}(S[P])/\mbox{Spec}(S[P^{gp}])]$.

\begin{lm}\label{lemma}
For any fine monoid $P$ the natural morphism
\begin{equation*}
\mathcal{S}_{P^{sat}}\to \mathcal{S}_P
\end{equation*}
is a normalization.
\end{lm}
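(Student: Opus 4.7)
The plan is to reduce the stack-level claim to a statement at the level of schemes by descent, and then to verify the three characteristic properties of a normalization for the scheme-level map.

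First, since $P\subset P^{sat}\subset P^{gp}$ implies $(P^{sat})^{gp}=P^{gp}$, the torus $T:=\mbox{Spec}(S[P^{gp}])$ acts on both $\mbox{Spec}(S[P])$ and $\mbox{Spec}(S[P^{sat}])$, and the inclusion $P\hookrightarrow P^{sat}$ yields a $T$-equivariant morphism whose stack quotient is the map in the lemma. The atlas $\mbox{Spec}(S[P])\to\mathcal{S}_P$ is a $T$-torsor, so the pullback of $\mathcal{S}_{P^{sat}}\to\mathcal{S}_P$ along it is precisely $\mbox{Spec}(S[P^{sat}])\to\mbox{Spec}(S[P])$. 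Normalization commutes with smooth base change, and the normalization of an algebraic stack is defined so that this property persists for smooth atlases; the lemma therefore reduces to showing the scheme-level morphism is a normalization.

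Second, I would verify finiteness and birationality at the scheme level. Finiteness follows from the classical fact that for a fine monoid $P$ the saturation $P^{sat}$ is a finitely generated $P$-module, whence $S[P^{sat}]$ is finite over $S[P]$. Birationality follows again from $(P^{sat})^{gp}=P^{gp}$: localizing both $S[P]$ and $S[P^{sat}]$ at the multiplicative set $P$ yields the same ring $S[P^{gp}]$, so the morphism restricts to the identity on the dense open subscheme $\mbox{Spec}(S[P^{gp}])$.

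The delicate step is normality of $S[P^{sat}]$. When $S$ is a field, the monoid algebra of a saturated fine monoid is known to be normal (classical toric rings result). For general normal Noetherian $S$, I would observe that $S[P^{sat}]$ is free as an $S$-module on the basis $P^{sat}$, hence flat over $S$, and its geometric fibres over $\mbox{Spec}(S)$ are of the form $\bar{k}(\mathfrak{p})[P^{sat}]$, which are normal by the field case. The standard criterion that a flat morphism with normal base and geometrically normal fibres has normal total space then yields normality of $S[P^{sat}]$.

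The main obstacle is this last step, namely ascending normality of saturated monoid algebras from fields to a normal Noetherian base. Once that is secured, the combination of normality, finiteness, and birationality identifies $\mbox{Spec}(S[P^{sat}])\to\mbox{Spec}(S[P])$ as the normalization, and the descent argument of the first paragraph then yields the stack-level statement.
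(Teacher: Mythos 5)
Your proposal is correct and rests on exactly the same key input as the paper's proof, which consists of the single sentence that the claim is immediate from the standard fact that $\mbox{Spec}(S[Q])$ is normal for a saturated torsion-free fine monoid $Q$; everything else you write (descent along the smooth atlas, finiteness of $P^{sat}$ over $P$, birationality via $(P^{sat})^{gp}=P^{gp}$, and the ascent of normality from geometric fibres to the normal Noetherian base) is a correct filling-in of what the paper treats as immediate. The only caveat, which you share with the paper itself, is that the normality of the saturated monoid algebra in the field case requires $P^{gp}$ (equivalently $P^{sat}$) to be torsion-free, whereas the lemma is stated for an arbitrary fine monoid.
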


\begin{proof}
This is immediate from the (standard) fact that
$\mbox{Spec}(S[Q])$ is normal for any saturated torsion-free fine monoid $Q$.
\end{proof}

Let us introduce some further notation: we will denote by $\mathcal{L}og$ the category of schemes with fine log structures treated in \cite{OlLogGeo}, and as in \cite{OlLogCplx} 
$\mathcal{L}^1$ will denote the stack whose fiber over a scheme $T$
is the category of pairs of fine log structures on $T$ with a morphism
between them. Furthermore,  $\widehat{\mathcal{L}og}$ will denote the substack of
$\mathcal{L}^1$ where the morphism of log structures is saturation.

\begin{pr}
The natural forgetful map (forgetting the saturation)
\begin{equation*}
\widehat{\mathcal{L}og}\to \mathcal{L}og
\end{equation*}
is a normalization.
\end{pr}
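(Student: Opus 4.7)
The plan is to reduce the claim to Lemma \ref{lemma} by smooth descent along Olsson's atlas for $\mathcal{L}og$. First, I would recall from \cite{OlLogGeo} that for each fine monoid $P$ the tautological morphism $\mathcal{S}_P \to \mathcal{L}og$, classifying the universal fine log structure equipped with a chart by $P$, is smooth, and that the collection of these morphisms, as $P$ varies over fine monoids, forms a smooth cover of $\mathcal{L}og$.

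The heart of the argument will be the base change identity
\[ \mathcal{S}_P \times_{\mathcal{L}og} \widehat{\mathcal{L}og} \simeq \mathcal{S}_{P^{sat}}, \]
under which the projection to the first factor becomes the morphism from Lemma \ref{lemma}. I would justify this via two observations: (i) if a fine log structure $M$ on a scheme $T$ carries a chart $P \to M$, then its saturation $M^{sat}$ inherits a canonical chart by $P^{sat}$; and (ii) specifying a saturation morphism $M \to M^{sat}$ is, up to canonical equivalence, the same as promoting the chart on $M$ from $P$ to $P^{sat}$.

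Once this Cartesian identification is in place, Lemma \ref{lemma} tells us that the base-changed morphism $\mathcal{S}_{P^{sat}} \to \mathcal{S}_P$ is a normalization. Since the property of being a normalization descends under smooth covers (ultimately because normality is preserved under smooth base change and can be tested smooth-locally), I would conclude that $\widehat{\mathcal{L}og} \to \mathcal{L}og$ is itself a normalization.

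The hardest part will be the base change identification. The delicate step is to match Olsson's moduli interpretation of $\mathcal{S}_P$ in terms of charts with the data carried by $\widehat{\mathcal{L}og}$, and to verify that the canonical chart $P^{sat} \to M^{sat}$ on the saturation is precisely what promotes a $T$-point of $\mathcal{S}_P$ to a $T$-point of $\mathcal{S}_{P^{sat}}$, with no additional automorphisms or choices intervening.
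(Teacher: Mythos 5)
Your overall strategy coincides with the paper's: both hinge on the identification $\mathcal{S}_P\times_{\mathcal{L}og}\widehat{\mathcal{L}og}\simeq\mathcal{S}_{P^{sat}}$ over Olsson's atlas, followed by Lemma \ref{lemma} and descent (note that the charted covers $\mathcal{S}_P\to\mathcal{L}og$ are in fact \'etale, not merely smooth, by Corollary 5.25 of \cite{OlLogGeo}, and the paper exploits this). The difference, and the problem, lies in how you propose to verify the Cartesian square. Your step (i) asserts that a fine log structure $M$ on $T$ with a chart by $P$ has a saturation $M^{sat}$ on $T$ carrying a canonical chart by $P^{sat}$. This is false in general: to extend $\alpha\colon M\to\mathcal{O}_T$ to the sheaf-theoretic saturation of $M$ inside $M^{gp}$ one must extract roots in $\mathcal{O}_T$ that need not exist. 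Take $P=\langle 2,3\rangle\subset\mathbb{N}$ and $T=\mathrm{Spec}\,k[P]$ the cuspidal cubic with its canonical log structure: the would-be chart $P^{sat}=\mathbb{N}\to\mathcal{O}_T$ requires the function $y/x$, which lives only on the normalization $\mathbb{A}^1$. Indeed, if every fine log structure admitted a saturation on the same underlying scheme, then $\widehat{\mathcal{L}og}\to\mathcal{L}og$ would be an isomorphism rather than a nontrivial normalization; the entire content of the Proposition is that saturations exist only after normalizing the base.

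The fix is to treat a $T$-point of $\widehat{\mathcal{L}og}$ as what it is, namely a pair $(M,\,M\to M')$ with $M\to M'$ a saturation morphism --- an extra datum whose existence is a genuine condition on $T$. The substantive claim is then that, given also a chart $P\to M$, one can \'etale-locally produce a compatible chart $P^{sat}\to M'$, i.e., a lift of $T\to\mathcal{S}_P$ to $\mathcal{S}_{P^{sat}}$, and that this correspondence is an equivalence with no extra automorphisms. This is exactly where the paper invokes Lemma 2.15 of \cite{FKato}: having observed that both horizontal arrows of the square are \'etale, it reduces the Cartesian property to a bijection on geometric points, which that lemma supplies. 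Your outline correctly flags the base-change identification as the hard step, but the argument you sketch for it does not go through as stated; you need either the chart-lifting statement for a \emph{chosen} saturation (F.~Kato's lemma or an equivalent) or the paper's reduction to geometric points via \'etaleness.
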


\begin{proof} 
By the cartesian diagram \begin{equation*}
\xymatrix{
\mathcal{S}_{P^{sat}}\ar[r]\ar[d] &\widehat{\mathcal{L}og} \ar[d] \\
 \mathcal{S}_{P} \ar[r] & \mathcal{L}og
}
\end{equation*} the result follows from Lemma \ref{lemma}.  

The diagram above is clearly commutative, and by Corollary 5.25 of \cite{OlLogGeo} both horizontal arrows are \'etale. Therefore, to prove it is cartesian we are reduced to checking a bijection between $k$-points (where $k$ is an algebraically closed field). By Lemma 2.15 of \cite{FKato} we have this bijection.
\end{proof}

\begin{define}\label{vnormal}
Let $X$ and $\mathcal{S}$ be algebraic stacks
and $X\to \mathcal{S}$ a morphism. Let $\widehat{\mathcal{S}}\to \mathcal{S}$ be a normalization.
Then the \emph{virtual normalization} of $X$ with respect to $\mathcal{S}$ is the fiber product $X\times_{\mathcal{S}}\widehat{\mathcal{S}}$.
\end{define}

\begin{example}
When $X$ is smooth over $\mathcal{S}$, the normalization coincides with the virtual normalization. The morphism 
$$\phi: \mathscr{B}^{\mathrm{bal}}_{g,n}(\mathscr{S}
_d)\rightarrow \mathscr{A}dm_{g,n,d}
$$
of \cite[Prop. 4.2.2]{ACV03}, which is a normalization, is therefore also a virtual normalization for the morphism from $\mathscr{A}dm_{g,n,d}$ to $\mathcal{L}og$ described in \cite[\S 3B]{Moc95}. 

\end{example}

In the next result, virtual normalization is again taken with respect to the
stack $\mathcal{L}og$.
The fiber products in question are described in
Lemma 2.15 and Corollary 2.16 of \cite{FKato},
respectively Proposition 2.7 of \cite{KKato}.

\begin{cor}
The fiber product of a pair of morphisms in the category of schemes with
fs log structures is the virtual normalization of the fiber product of the
same morphisms in the category of schemes with fine log structures.
\end{cor}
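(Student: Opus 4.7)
The plan is to recognize both sides of the claimed equality as intrinsic objects and then match them étale locally by means of the cartesian square constructed in the proof of the Proposition. Denote by $X \to Z \leftarrow Y$ the given pair of morphisms of schemes with fs log structures, and by $P^{\mathrm{fine}}$ and $P^{\mathrm{fs}}$ their fiber products in the categories of schemes with fine and with fs log structures, respectively. The fine log structure on $P^{\mathrm{fine}}$ supplies a canonical morphism $P^{\mathrm{fine}}\to \mathcal{L}og$, and by Definition \ref{vnormal} applied to the normalization $\widehat{\mathcal{L}og}\to \mathcal{L}og$ produced by the Proposition, the virtual normalization of $P^{\mathrm{fine}}$ with respect to $\mathcal{L}og$ is $P^{\mathrm{fine}}\times_{\mathcal{L}og}\widehat{\mathcal{L}og}$. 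The corollary therefore reduces to constructing a canonical isomorphism $P^{\mathrm{fs}}\cong P^{\mathrm{fine}}\times_{\mathcal{L}og}\widehat{\mathcal{L}og}$.

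Since the question is étale local on $P^{\mathrm{fine}}$, I would choose fine charts $P_X$, $P_Y$, $P_Z$ for the log structures of $X$, $Y$, $Z$ compatible with the two given morphisms, and set $P := P_X\oplus_{P_Z}P_Y$, the pushout in fine monoids. By Lemma 2.15 of \cite{FKato}, the fine fiber product is then described, étale locally, by an étale morphism $P^{\mathrm{fine}}\to \mathcal{S}_P$ compatible with the given maps to $\mathcal{L}og$; similarly, by Corollary 2.16 of \cite{FKato} (equivalently Proposition 2.7 of \cite{KKato}), the fs fiber product is described by an étale morphism $P^{\mathrm{fs}}\to \mathcal{S}_{P^{sat}}$, where $P^{sat}$ is the saturation. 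These fit together into a canonical morphism $P^{\mathrm{fs}}\to P^{\mathrm{fine}}$ lying over $\mathcal{S}_{P^{sat}}\to \mathcal{S}_P$.

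The cartesian diagram appearing in the proof of the Proposition now reads $\mathcal{S}_{P^{sat}}=\mathcal{S}_P\times_{\mathcal{L}og}\widehat{\mathcal{L}og}$, so base changing along the étale chart $P^{\mathrm{fine}}\to\mathcal{S}_P$ yields
\begin{equation*}
P^{\mathrm{fine}}\times_{\mathcal{L}og}\widehat{\mathcal{L}og}\;=\;P^{\mathrm{fine}}\times_{\mathcal{S}_P}\mathcal{S}_{P^{sat}},
\end{equation*}
and the chart description of $P^{\mathrm{fs}}$ identifies the right-hand side with $P^{\mathrm{fs}}$. Because both sides of this comparison are intrinsic to the pair of morphisms, the local identifications glue to a canonical global isomorphism.

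The main obstacle I anticipate is bookkeeping rather than conceptual: one must verify that the chart-level description of the fs fiber product in \cite{FKato}/\cite{KKato} really is the pullback of $\mathcal{S}_{P^{sat}}\to\mathcal{S}_P$ along the étale chart of $P^{\mathrm{fine}}$, and that the resulting isomorphism is independent of the choices of charts. This independence is essentially formal, since the fs fiber product and the virtual normalization of the fine fiber product are each defined intrinsically, so any two local comparisons must agree on overlaps.
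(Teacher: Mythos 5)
Your argument is correct and follows the same route the paper intends: the paper states this corollary without a written proof, merely pointing to the chart descriptions of the two fiber products in \cite{FKato} and \cite{KKato} and letting it follow from the cartesian square $\mathcal{S}_{P^{sat}}=\mathcal{S}_P\times_{\mathcal{L}og}\widehat{\mathcal{L}og}$ established in the proof of Proposition 1, which is exactly the reduction you carry out. One small correction: the chart morphism $P^{\mathrm{fine}}\to\mathcal{S}_P$ is strict rather than \'etale (it is $\mathcal{S}_P\to\mathcal{L}og$ that is \'etale), but your base-change computation never actually uses \'etaleness of that arrow, so the argument is unaffected.
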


\section{Stable ramified and log stable ramified map spaces}
\label{two}

Our goal in Section \ref{three} will be to compare the virtual fundamental class of the stack of $(g,\beta,\mu)$-stable ramified maps $\overline{\mathfrak U}_{g,\mu}(X,\beta)$ with that of the stack of $(g,\beta,\mu)$-log stable ramified maps $\overline{\mathfrak U}^{\mathrm{log}}_{g,\mu}(X,\beta)$, constructed using the machinery described by B.\ Kim in \cite{Kim}. Let us now recall some definitions.

Let $X$ be a nonsingular projective variety over $k$. In \cite{KKO}, B.\,Kim, A.\,Kresch, and Y.G.\,Oh gave a definition for $(g,\beta,\mu)$-stable ramified maps: $n$--pointed, genus $g$ stable maps to a Fulton--MacPherson (FM) degeneration space with prescribed ramification indices at the marked points. We refer the reader to \cite{FM94} for foundational material on FM degeneration spaces, to \cite{FP97} for an introduction to stable maps, and to \cite{KKO} for the notation not made explicit here.

\begin{define}[\cite{KKO}] Let $\beta\in NE_1(X)$ be an element in the semigroup of effective curve classes. Let $g,n\in\mathbb{Z}_{\geq 0}$, and $\overline\mu = (\mu_1,\dots,\mu_n)$, $\mu_i\in\mathbb{Z}_{\geq 1}$.

A triple 
$$
((\pi:C\rightarrow S,p_1,\dots,p_n), (\pi_{W/S}:W\rightarrow S,\pi_{W/X}:W\rightarrow X), f:C\rightarrow W)
$$
is called a \emph{stable map with $\overline\mu$-ramification from a $n$--pointed, genus $g$ curve to a FM degeneration space $W$ of $X$, representing class $\beta$} if:

\begin{enumerate}
\item $(C,p_1,\dots,p_n)$ is a $n$--pointed, genus $g$, prestable curve over $k$,
\item $  (\pi_{W/S}:W\rightarrow S,\pi_{W/X}:W\rightarrow X) $ is a FM degeneration of $X$ over $S$,
\item $f:C\rightarrow W$ is a morphism over $S$,
\item over each geometric point of $S$ the pushforward of the fundamental class of $C$ is $\beta$,
\item the following four conditions are satisfied:
\begin{enumerate}
\item \emph{Prescribed Ramification Index Condition:}
\begin{itemize}
\item $C^{sm}=f^{-1}(W^{sm})$,
\item $f|_{C^{sm}}$ is unramified everywhere possibly at $p_i$,
\item $f$ has ramification index $\mu_i$ at $p_i$.
\end{itemize}
\item \emph{Distinct Points Condition:} $f(p_i)$, $i=1,\dots n$ are pairwise distinct points of $W$, over each geometric point of $S$.
\item \emph{Admissibility Condition:} For any geometric point $t$ of $S$, if $p$ is a nodal point of $C_t$, then $\widehat{f}^*$ can be expressed as:
$$
\widehat{\mathcal{O}}_{f(p)}\cong\widehat{\mathcal{O}}_{\pi_S(p)}[[z_1,\dots,z_{r+1}]]/(z_1z_2-s)\stackrel{\widehat{f}^*}{\rightarrow}\widehat{\mathcal{O}}_{p}\cong\widehat{\mathcal{O}}_{\pi_S(p)}[[x,y]]/(xy-s')
$$
with $\widehat{f}^*z_1=\alpha_1 x^m$, $\widehat{f}^*z_2=\alpha_2 y^m$, no restriction on $z_i$ for $i>2$, $\alpha_i$ being units in $\widehat{\mathcal{O}}_p$ with condition $\alpha_1\alpha_2\in\widehat{\mathcal{O}}_{\pi_S(p)}$, $s,s'\in\widehat{\mathcal{O}}_{\pi_S(p)}$, and $m$ being a positive integer.
\item \emph{Stability Condition:} over each geometric point of $S$, the following are satisfied:
\begin{itemize}
\item For each ruled component $W_r$ of $W$, there is an \linebreak image of a marking in $W_r$ ($f(p_i)\in W_r$ for some $i$), or a non-fiber image $f(D)\subset W_r$ of an irreducible component $D$ of $C$,
\item For each end component $W_e\cong\mathbb{P}^r$ of $W$, there are either images of two distinct markings in $W_e$ or a non-line image $f(D)\subset W_e$ of an irreducible component $D$ of $C$.

\end{itemize}
\end{enumerate}
\end{enumerate}
\end{define}

The stack of $(g,\beta,\mu)$-stable ramified maps to FM degeneration spaces of $X$ is denoted by $\overline{\mathfrak U}_{g,\mu}(X,\beta)$. They proved it to be a Deligne--Mumford stack of finite type carrying a perfect obstruction theory:
$$R\pi_*(f^*T^\dag_W(-\sum\mu_i p_i))^\vee \rightarrow L^\bullet_{\overline{\mathfrak U}_{g,\mu}(X,\beta)/\mathfrak B}$$ over the stack $\mathfrak B$ of $n$--pointed, genus $g$, prestable curves, FM spaces (of $X$, with $n$-tuples of smooth pairwise distinct points), fine log structures, and pairs of morphisms of log structures:
$$ (C\rightarrow S, W\rightarrow S, N, N^{C/S}\rightarrow N, N^{W/S}\rightarrow N).$$
By the results of Behrend and Fantechi  \cite{BF97}, this perfect obstruction theory yields a virtual fundamental class  $[\overline{\mathfrak U}_{g,\mu}(X,\beta)]^{\mathrm{vir}}$.

Now, let us introduce the definition of the second space of interest to us, as well as the perfect obstruction theory carried by it. In \cite[5.2.6.]{Kim}, Kim defines \emph{log stable $\mu$-ramified maps}. From this construction, we now introduce the notion of \emph{$(g,\beta,\mu)$-log stable ramified map}.

\begin{define}
Let $\beta\in NE_1(X)$. Let $g,n\in\mathbb{Z}_{\geq 0}$, and $\overline\mu = (\mu_1,\dots,\mu_n)$, $\mu_i\in\mathbb{Z}_{\geq 1}$. A pair
$$((f:(C,M_C,p_1,\dots,p_n)\rightarrow (W,M_W))/(S,N), \pi_{W/X}:W\rightarrow X)
$$
is called a \emph{$(g,\beta,\mu)$-log stable ramified map} if:

\begin{enumerate}
\item $((C,M)/(S,N), p_1,\dots,p_n)$ is a $n$-pointed, genus $g$, minimal log prestable curve,
\item $(W,M_W)/(S,N)$ is a log twisted FM type space,
\item $f:(C,M_C)\rightarrow(W,M_W)$ is a log morphism over $(S,N)$,
\item $\underline{f}: C\rightarrow W$, the underlying map to $f$,  is a stable $(g,\beta,\mu)$-ramified map over $S$.

\vspace{2mm}

\end{enumerate}
\end{define}

As mentioned before we use the notation $\overline{\mathfrak U}^{\mathrm{log}}_{g,\mu}(X,\beta)$ for the stack of such maps. By Kim's construction, this stack also carries a perfect obstruction theory:
$$
(R\pi_*f^* T^\dag_{\mathfrak X^+/\mathfrak X} (-\sum \mu_i p_i))^{\vee}\rightarrow  L^{\bullet}_{\overline{\mathfrak U}^{\mathrm{log}}_{g,\mu}(X,\beta)/\mathfrak{MB}}
$$ over $\mathfrak{MB}:=\mathfrak{M}_{g,n}^{\mathrm{log}} \times_{\mathcal{L}og}\, {\mathfrak X}^{+(n),\mathrm{tw}}$, giving rise to a virtual fundamental class  $[\overline{\mathfrak U}^{\mathrm{log}}_{g,\mu}(X,\beta) ]^{\mathrm{vir}}$. Here $\mathfrak{M}_{g,n}^{\mathrm{log}}$ is the algebraic stack in \cite[6.2.1.]{Kim} and, following \cite{KKO}, $\mathfrak{X}$ denotes the stack of FM degeneration spaces, with universal FM space $\mathfrak{X}^+ \rightarrow \mathfrak{X}$, we have the space
$\mathfrak{X}^{+(n)}$, open in the $n$-fold fiber product of $\mathfrak{X}^+$ over $\mathfrak{X}$ of
pairwise distinct $n$-tuples of smooth points, and the tw denotes log twisted FM type space as in \cite{Kim}.
(Because all nodes are distinguished, we may use log twisted rather than extended log twisted FM type spaces here.) So $\mathfrak{MB}$ is an open substack of $\mathfrak B$.

\begin{pr} Let $N'$ be the log structure on $\overline{\mathfrak U}^{\mathrm{log}}_{g,\mu}(X,\beta)$ whose sheaf of monoids is the subsheaf of $N^{C/S}$ generated by $m \log(s')$ for every node of $C$. Then, the commuting diagram 
$$
\xymatrix{
(\overline{\mathfrak U}^{\mathrm{log}}_{g,\mu}(X,\beta), N) \ar[r] \ar[d] & (\overline{\mathfrak U}_{g,\mu}(X,\beta), N^{W/S}) \ar[d] \\
( \overline{\mathfrak U}_{g,\mu}(X,\beta), N^{C/S} ) \ar[r]  &  (\overline{\mathfrak U}_{g,\mu}(X,\beta), N')
}
$$
is cartesian in the category of schemes with fs log structures.

\end{pr}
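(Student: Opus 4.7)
The strategy is to verify the cartesian property étale-locally on $\overline{\mathfrak U}_{g,\mu}(X,\beta)$ and then apply the Corollary above to pass from fine log structures to fs log structures. Away from the nodes of the universal curve $C$, the four log structures $N$, $N^{C/S}$, $N^{W/S}$ and $N'$ agree after restriction (everything comes from the base), so the square is trivially cartesian there; the content is concentrated at the nodes.

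Étale-locally at a node, the characteristic monoids admit explicit presentations: $\overline{N}^{C/S}$ is generated by $\log x,\log y$ subject to $\log x + \log y = \log s'$ in the base, and $\overline{N}^{W/S}$ by $\log z_1,\log z_2$ subject to $\log z_1+\log z_2=\log s$. The admissibility condition $\widehat{f}^*z_1=\alpha_1 x^m$ and $\widehat{f}^*z_2=\alpha_2 y^m$, combined with $\alpha_1\alpha_2\in\widehat{\mathcal{O}}_{\pi_S(p)}$, identifies both $\log s$ and $m\log s'$ with the generator of $\overline{N}'$. I would then compute the fiber product in fine log schemes as the pushout of these characteristic monoids along $\overline{N}'$, yielding a finitely generated integral (but generally non-saturated) monoid carrying the relation $m(\log x+\log y)=\log z_1+\log z_2$.

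To conclude, I would invoke the Corollary to identify the fs fiber product with the virtual normalization (with respect to $\mathcal{L}og$) of the fine fiber product computed above. This reduces the proposition to the monoidal statement that saturating the pushout yields Kim's minimal monoid at a log twisted FM node, together with the observation that the corresponding fs log scheme is $(\overline{\mathfrak U}^{\mathrm{log}}_{g,\mu}(X,\beta), N)$. The underlying-scheme change between the top-left and top-right corners is precisely the virtual normalization predicted by the Corollary, which is consistent with the expected map $\overline{\mathfrak U}^{\mathrm{log}}_{g,\mu}(X,\beta)\to\overline{\mathfrak U}_{g,\mu}(X,\beta)$.

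The main obstacle is this final identification: one must simultaneously track the combinatorics of saturation at every node and verify that the resulting underlying scheme is exactly $\overline{\mathfrak U}^{\mathrm{log}}_{g,\mu}(X,\beta)$ rather than some further modification. Kim's minimality of $N$ is what forces the match, but extracting it requires a careful analysis of how the ramification index $m$ intertwines the monoids of $C$ and $W$. Once this monoidal verification is in place, the cartesian property in the fs category follows formally from the Corollary.
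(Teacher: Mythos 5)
Your route is genuinely different from the paper's, which disposes of the proposition in one line: it observes that the statement \emph{is} Kim's definition of log prestable map \cite[5.2.2.]{Kim}. In Kim's construction the log structure $N$ on the base is defined precisely by the requirement that it receive morphisms from $N^{C/S}$ and from $N^{W/S}$ that agree on $N'$ (via the admissibility relation $s=\alpha_1\alpha_2(s')^m$) and that it be minimal, i.e.\ universal, among fs log structures with this property. Being the universal fs recipient of the two maps out of $N'$ is exactly the assertion that the displayed square is cocartesian for log structures, hence cartesian in the category of schemes with fs log structures. So the item you flag as ``the main obstacle'' --- that the saturated pushout is Kim's minimal monoid and that the resulting stack is $\overline{\mathfrak U}^{\mathrm{log}}_{g,\mu}(X,\beta)$ --- is not something to be extracted by a computation; it is built into the definition of the objects that $\overline{\mathfrak U}^{\mathrm{log}}_{g,\mu}(X,\beta)$ parametrizes.

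As written, however, your proposal has a genuine gap at exactly that point: you reduce the proposition to the monoidal identification plus the identification of the underlying stack, and then defer both to ``a careful analysis'' that is not supplied. Since those two assertions together are the proposition, the argument as it stands is a reformulation rather than a proof. If you do want to carry out the computational route, two corrections are needed. First, all four log structures $N$, $N^{C/S}$, $N^{W/S}$, $N'$ live on the moduli stack (the base $S$), not on $C$ or $W$; the relevant characteristic monoids are generated by the smoothing parameters $\log s'$ (one per node) and $\log s$ (one per singular divisor of $W$), not by $\log x,\log y,\log z_1,\log z_2$, and the interesting combinatorics arises when several nodes with different indices $m_j$ map to the same divisor, forcing roots of the $s'_j$ upon saturation. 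Second, the fiber product in the category of \emph{fine} log schemes already involves an integralization that can change the underlying scheme (this is the content of \cite[Prop.\ 2.7]{KKato} cited before the Corollary), so one cannot simply declare that the fine fiber product is the same stack equipped with the naive pushout log structure before applying the Corollary to saturate.
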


\begin{proof}
This follows directly from the definition of log prestable map in \cite[5.2.2.]{Kim}.
\end{proof}

\section{Comparison of virtual fundamental classes}
\label{three}

Let us define $\widehat{\mathfrak{B}}= {\mathfrak{B}}\times_{{\mathcal L}og} \widehat{\mathcal{L}og}$. Then, we have the cartesian diagram:

\begin{equation*}
\xymatrix{
\overline{\mathfrak U}^{\mathrm{log}}_{g,\mu}(X,\beta) \ar[r] \ar[d]_{\phi} &
\widehat{\mathfrak{B}} \ar[r] \ar[d] &
\widehat{\mathcal{L}og} \ar[d] \\
\overline{\mathfrak{U}}_{g,\mu}(X,\beta) \ar[r] &
\mathfrak{B} \ar[r] &
\mathcal{L}og
}
\end{equation*}
where $\phi$ is the forgetful map. (The outer square is
cartesian by Corollary 1 and Proposition 2.) From $\widehat{\mathfrak{B}}$ to $ {\mathfrak{B}}$ there is another morphism, which forgets the not-necessarily-saturated log structure, and this morphism is relatively Deligne-Mumford and \'etale by \cite[Prop. 2.11]{OlLogCplx}.
It follows that $L^{\bullet}_{\overline{\mathfrak U}^{\mathrm{log}}_{g,\mu}(X,\beta)/\mathfrak{MB}}=L^{\bullet}_{\overline{\mathfrak U}^{\mathrm{log}}_{g,\mu}(X,\beta)/\widehat{\mathfrak B}}$.

\begin{tm}
We have the equality of virtual fundamental classes $$[\overline{\mathfrak{U}}_{g,\mu}(X,\beta)]^{\mathrm{vir}}=
\phi_*[\overline{\mathfrak U}^{\mathrm{log}}_{g,\mu}(X,\beta) ]^{\mathrm{vir}}.$$
\end{tm}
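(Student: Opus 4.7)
The plan is to invoke Costello's pushforward formula \cite[Theorem~5.0.1]{Costello} applied to the morphism $\phi$. This theorem yields $\phi_*[\overline{\mathfrak U}^{\mathrm{log}}_{g,\mu}(X,\beta)]^{\mathrm{vir}} = [\overline{\mathfrak U}_{g,\mu}(X,\beta)]^{\mathrm{vir}}$ from three inputs: $\phi$ is proper, of pure degree one, and equipped with POTs that are $\phi$-compatible in the derived-categorical sense.

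The POT compatibility is the principal structural step. The outer square of the displayed diagram is cartesian, so $\phi$ is the base change along $\overline{\mathfrak U}_{g,\mu}(X,\beta) \to \mathfrak B$ of $\widehat{\mathfrak B} \to \mathfrak B$, itself the pull-back along $\mathfrak B \to \mathcal{L}og$ of the normalization $\widehat{\mathcal{L}og} \to \mathcal{L}og$ of Proposition~2. The POT $R\pi_*(f^* T^\dag_W(-\sum \mu_i p_i))^\vee$ on $\overline{\mathfrak U}_{g,\mu}(X,\beta)/\mathfrak B$ pulls back along $\phi$ to the POT $R\pi_*(f^* T^\dag_{\mathfrak X^+/\mathfrak X}(-\sum \mu_i p_i))^\vee$ on $\overline{\mathfrak U}^{\mathrm{log}}_{g,\mu}(X,\beta)/\widehat{\mathfrak B}$; by the identity $L^{\bullet}_{\overline{\mathfrak U}^{\mathrm{log}}/\mathfrak{MB}}=L^{\bullet}_{\overline{\mathfrak U}^{\mathrm{log}}/\widehat{\mathfrak B}}$ recorded in the excerpt, this matches Kim's POT relative to $\mathfrak{MB}$ defining $[\overline{\mathfrak U}^{\mathrm{log}}_{g,\mu}(X,\beta)]^{\mathrm{vir}}$. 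Base change of cotangent complexes then furnishes the compatibility triangle required by Costello.

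Properness of $\phi$ is inherited by base change from the finite morphism $\widehat{\mathcal{L}og} \to \mathcal{L}og$. For pure degree one, use that $\widehat{\mathcal{L}og} \to \mathcal{L}og$ is an isomorphism over the fs (hence, by Lemma~1, normal) locus of $\mathcal{L}og$, so $\phi$ is an isomorphism over the preimage in $\overline{\mathfrak U}_{g,\mu}(X,\beta)$ of that locus. Since the generic log structure attached to a stable ramified map is already saturated---nontrivial contributions are concentrated over boundary strata where the curve acquires nodes or the FM space degenerates---this preimage is dense in every irreducible component of $\overline{\mathfrak U}_{g,\mu}(X,\beta)$, giving $\phi_*[\overline{\mathfrak U}^{\mathrm{log}}_{g,\mu}(X,\beta)] = [\overline{\mathfrak U}_{g,\mu}(X,\beta)]$ as ordinary cycles, and hence pure degree one.

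The main obstacle will be this density check for pure degree one, which requires a moduli-theoretic argument confirming that each irreducible component of $\overline{\mathfrak U}_{g,\mu}(X,\beta)$ contains a generic point whose associated curve and FM space are generic enough for the ambient fine log structure to coincide with its saturation. Granted that, Costello's theorem applies verbatim and concludes the proof.
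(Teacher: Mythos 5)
Your overall strategy coincides with the paper's: Costello's Theorem 5.0.1, with the compatibility of obstruction theories supplied by the cartesian diagram together with the equality $L^{\bullet}_{\overline{\mathfrak U}^{\mathrm{log}}_{g,\mu}(X,\beta)/\mathfrak{MB}}=L^{\bullet}_{\overline{\mathfrak U}^{\mathrm{log}}_{g,\mu}(X,\beta)/\widehat{\mathfrak B}}$, and those parts are fine. The genuine gap is in your verification of the degree-one hypothesis. You set out to show that $\phi$ itself has pure degree one as a map of cycles, by arguing that the locus where the fine log structure is already saturated is dense in every irreducible component of $\overline{\mathfrak U}_{g,\mu}(X,\beta)$; you yourself flag this density check as the unproved ``main obstacle.'' It is not merely unproved---there is no reason to expect it to hold. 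The moduli space can have irreducible components lying entirely over the boundary (every domain curve nodal, with contact order $m>1$ at some node throughout the component), and on such a component the fine log structure is nowhere saturated, so $\phi$ need not be birational there and $\phi_*$ of the ordinary fundamental class need not return the ordinary fundamental class. An argument routed through that claim will not close.

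The paper avoids this issue entirely. In Costello's theorem the pure-degree-one hypothesis is not imposed on $\phi\colon X\to Y$ but on the induced map from $X$ to the fiber product of $Y$ with the new base stack; since the outer square of the displayed diagram is cartesian, $\overline{\mathfrak U}^{\mathrm{log}}_{g,\mu}(X,\beta)$ is literally the fiber product $\overline{\mathfrak{U}}_{g,\mu}(X,\beta)\times_{\mathcal{L}og}\widehat{\mathcal{L}og}$, and the only degree-one input that remains is the general fact that a normalization (here $\widehat{\mathcal{L}og}\to\mathcal{L}og$, hence $\widehat{\mathfrak{B}}\to\mathfrak{B}$) has generic degree one. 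No genericity or density statement about $\overline{\mathfrak U}_{g,\mu}(X,\beta)$ itself is needed; that is precisely what the virtual-normalization formalism buys. Replace your density argument with this observation and the proof goes through; as written, it is incomplete and leans on a claim that is false in general.
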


\begin{proof}
 By the above cartesian diagram, this follows from \cite[Theorem 5.0.1]{Costello}, as normalizations have generic degree 1 and the compatibility of obstruction theories follows from the equality of cotangent complexes above.

\end{proof}

\end{document}